\newtheorem{theorem}{Theorem}[section]
\newtheorem{proposition}[theorem]{Proposition}
\newtheorem{lemma}[theorem]{Lemma}
\newtheorem{definition}[theorem]{Definition}
\newcommand{\udots}{\mathinner{\mskip1mu\raise1pt\vbox{\kern7pt\hbox{.}}
\mskip2mu\raise4pt\hbox{.}\mskip2mu\raise7pt\hbox{.}\mskip1mu}}
\begin{document}

\title{Existence of The Solution to The Quadratic Bilinear Equation Arising from A Class of Quadratic Dynamical Systems}

\author{Bo Yu\footnote{School of Science, Hunan University of Technology, Zhuzhou, 412000, P. R. China.}
\ \ Ning Dong\footnotemark[1] \footnote{dongning\_158@sina.com}
\ \ Qiong Tang\footnotemark[1] }

\date{\empty}

\maketitle
\begin{abstract}
A quadratic dynamical system with practical applications is taken into considered. This system is transformed into a new bilinear system with Hadamard products by means of the implicit matrix structure. The corresponding quadratic bilinear equation is subsequently established via the Volterra series. Under proper conditions the existence of the solution to the equation is proved by using a fixed-point iteration.
\end{abstract}

{\bf Keywords:}  quadratic bilinear system, Kronecker product, Hadamard product, existence of the solution, fixed-point iteration

{\bf AMS Subject Classification:}
65F50; 15A24
\section{Introduction}
Consider a single-input and single-output quadratic dynamical system (QDS)
\begin{equation}\label{ns}
  \begin{array}{l}
   \dot{x}(t) =Ax(t) + g(x(t),u(t)),\\
   y(t) =Cx(t),
   \end{array}
\end{equation}
where $x(t)\in {\mathbb{R}}^{n}$ is the state vector of time $t$, $u(t)\in {\mathbb{R}}$ denotes an input function, $g\in {\mathbb{R}}^{n}$ represents a
quadratic function of $u(t)$ and $x(t)$,  $y(t)\in{\mathbb{R}}$ is the output function, $A\in {\mathbb{R}}^{n\times n}$ and $C\in {\mathbb{R}}^{1\times n}$ are the state and the output matrices, respectively.
This system is one of the simplest nonlinear systems and is widely used in many applications \cite{Ant05, BG17, G11, SVR08}. Consider, for example, a transmission line circuit consisting of resistors, capacitors, and diodes with a constitutive nonlinear function $i_d(v) = e^{av}-1, (a>0)$ \cite{G11, BG17}. Assumed that, for simplicity, all resistors and capacitors have unit resistance and
capacitance, then the input and output are the entering current source and the voltage at the first node, respectively.  The corresponding differential system for this circuit at various nodes is
\begin{eqnarray}
 \begin{array}{l}
\dot{v}_1 = -2v_1 +v_2 +2 -e^{av_1} -e^{a(v_1-v_2)} +u(t),\\
\dot{v}_i = v_{i-1}-2v_i +v_{i+1} +e^{a(v_{i-1}-v_i)} -e^{a(v_i-v_{i+1})}, \ \ \ \ 2\leq i\leq n-1,\\
\dot{v}_n = v_{n-1} +v_n -1 +e^{a(v_{n-1}-v_n)}.\nonumber
 \end{array}
\end{eqnarray}

To linearize the above nonlinear system, one can define variables $w_{i1}:=e^{av_i}$ and $w_{i2}:=e^{-av_i}$ to obtain a system of order of at least $3n$. In contrast, another difference step might further reduce the order of the system. In fact, by setting $v_{i,i+1} =v_i-v_{i-1}$ as in \cite{G11}, one has
\begin{eqnarray}\label{dotv}
 \begin{array}{l}
\dot{v}_1 = -v_1 -v_{12} +2 -e^{av_1} -e^{av_{12}} +u(t),\\
\dot{v}_{12} = -v_1 -2v_{12} +v_{23} +2 -e^{av_1} -e^{av_{12}} + e^{av_{23}} +u(t),\\
\dot{v}_{i,i+1} = v_{i-1,i}-2v_{i,i+1} +v_{i+1,i+2} +e^{av_{i-1,i}} -2e^{av_{i,i+1}} +e^{av_{i+1,i+2}}, \ \ \ \ 2\leq i\leq n-2,\\
\dot{v}_{n-1,n} = v_{n-2,n-1} -2v_{n-1,n} +1 +e^{av_{n-2,n-1}}-2e^{av_{n-1,n}}.
 \end{array}
\end{eqnarray}
Let $w_1 =e^{av_{1}}-1$ and $w_i = e^{av_{i-1,i}}-1$ and differentiate both sides with respect to $t$. Then equations \eqref{dotv} can be further represented as
\begin{eqnarray}\label{dotw}
 \begin{array}{l}
\dot{w}_1 = a(w_1+1)(-v_1 -v_{12} -w_1 -w_2 +u(t)),\\
\dot{w}_{2} = a(w_2+1)(-v_1 -2v_{12} +v_{23} -w_1 -2w_2 + w_3 +u(t)),\\
\dot{w}_{i} = a(w_i+1)(v_{i-1,i}-2v_{i,i+1} +v_{i+1,i+2} + w_{i-1} -2y_i +y_{i+1}), \ \ \ \ 2\leq i\leq n-1,\\
\dot{w}_{n} = a(w_n+1)(v_{n-2,n-1} -2v_{n-1,n} +w_{n-1} -2w_n).
 \end{array}
\end{eqnarray}
Combining of \eqref{dotv} and \eqref{dotw} forms the quadratic bilinear system of order $N=2n$ \cite{BG17}
\begin{equation}\label{qb}
  \begin{array}{l}
   \dot{x}(t) =Ax(t) + H (x(t)\otimes x(t)) + Mx(t)u(t) +Bu(t),\\
   y(t) =Cx(t),
   \end{array}
\end{equation}
where the state vector is
$
x(t) = (\dot{v}_1, \dot{v}_{12}..., \dot{v}_{n-1,n}, \dot{w}_1, ... \dot{w}_n)^\top\in {\mathbb R}^{N},
$
the state matrix is
\[
A =
\left[
  \begin{matrix}
   A_1 & A_2 \\
   A_3 &  A_4
   \end{matrix}
\right] \in {\mathbb R}^{N\times N}
\]
with $A_i (i=1,2,3,4)$ being the tri-diagonal matrix,
$H\in {\mathbb R}^{N\times N^2}$ and $M\in {\mathbb R}^{N\times N}$ are sparse matrices associated with the quadratic functions $x(t)\otimes x(t)$ and $x(t)u(t)$, respectively, $B$ is a vector of order $N$.

To efficiently control the quadratic system \eqref{qb} when $N$ is large, one has to search a low-dimensional (reduced-order) system to substitute for the original one, so that their systematic behaviours (for example, the stability and passivity) are
sufficiently similar. Such a process is called the model
order reduction (MOR) and has been well-established for linear systems in various areas \cite{AWW08, BMN04}. One of the most popular MOR techniques is the balancing-type MOR, which has been successfully applied from the linear system to the nonlinear system \cite{BCI04, GPA18}. This approach mainly relies on the controllability and the observability, or the Gramian matrix of the system which is the solution to the corresponding algebraic matrix equation \cite{BG17}
\begin{equation}\label{qbemar}
A  X+X A^\top +  H(X\otimes X)H^\top + MXM^\top  +D =0
\end{equation}
with $D =BB^\top$. Obviously, solving the equation \eqref{qbemar} involves a Kronecker product of the order $N^2$ and is normally expensive even if techniques of the truncation and compression \cite{LWCL13} or the tensor matrization \cite{KB09} are applied.

Noting the implicit structure in the original system, the system \eqref{qb} can actually be transformed into another system to avoid the Kronecker product effectively. Indeed, let
\[
F =
\left[
  \begin{matrix}
   0_n & 0_n \\
   A_3 &  A_4
   \end{matrix}
\right] \in {\mathbb R}^{N\times N}
\]
and $G =I_N$. The quadratic item $H(x(t)\otimes x(t))$ in this example could be represented as $Gx(t)\circ Fx(t)$, and thus the system \eqref{qb} in \cite{BG17} can be further rewritten as the quadratic bilinear system with Hadamard product (QBSH)
\begin{equation}\label{qbs}
  \begin{array}{l}
   \dot{x}(t) =Ax(t) + (Gx(t))\circ (Fx(t)) + Mx(t)u(t) +Bu(t),\\
   y(t) =Cx(t).
   \end{array}
\end{equation}

The greatest advantage of the system \eqref{qbs} is that the nonlinear item depends merely on the Hadamard product, instead of the Kronecker product, between two vectors. Hence the computational cost could be significantly reduced especially for large $N$. If the afore-mentioned balancing-type MOR is used for the order reduction, two problems are still supposed to be addressed:
\begin{itemize}
  \item What is the form of the algebraic equation corresponding to the QBSH \eqref{qbs}?
  \item Does the solution to the corresponding algebraic equation exist?
\end{itemize}

This paper will give positive answers to the above two questions. Specifically, we will make use of the Volterra series \cite{S89} to construct the corresponding quadratic bilinear equation of the QBSH \eqref{qbs} in the next section. In Section 3, the existence of the solution to the equation will be demonstrated by a fixed-point iteration. Several numerical examples are listed in Section 4 to show the validity of the developed theory and the last section concludes the whole paper.

To proceed, the initial condition in the system \eqref{qbs} is assumed to be $x(0) = 0$. Throughout this paper, it is written $A\geq B$ ($A>B$) for symmetric matrices $A$ and $B$ if $A-B$ is a symmetric positive semidefinite (definite) matrix. $\sigma(A)$ and $\rho(A)$ denote here the spectrum and the spectral radius of the matrix $A$, respectively. The definition of the stability and several lemmas are also required in this paper.

\begin{definition}[\cite{B97}]
The matrix $A$ is called stable (or semi-stable) if its spectrum lies in the left half of the complex plane (or the left half of the complex plane plus the imaginary axis), i.e. $\sigma(A)\in {\mathbb C}_{<}^{N\times N}$ (or $\sigma(A)\in {\mathbb C}_{\leq}^{N\times N}$).
\end{definition}

\begin{lemma}[\cite{B05, O10}]\label{lem12}
Let the matrix $A\in {\mathbb R}^{N\times N}$ be stable in a linear system
 \[
  \begin{array}{l}
   \dot{x}(t) =Ax(t) + Bu(t),\\
   y(t) =Cx(t),  \ \ x(0) = 0.
   \end{array}
 \]
The matrix $X = \int_0^{\infty} e^{At}BB^\top e^{A^\top t}dt$ is the solution of the Lyapunov equation
\[
AX +XA^\top+D=0
\]
with $D=BB^\top$.
\end{lemma}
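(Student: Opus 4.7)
The plan is to verify directly that the explicit integral formula for $X$ satisfies the Lyapunov equation by differentiating the integrand and invoking the fundamental theorem of calculus. This is the standard textbook proof, and the stability of $A$ enters only to make the improper integral well defined and to kill the boundary term at $t=\infty$.

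First I would confirm that $X$ is well defined. Since $A$ is stable, every eigenvalue has strictly negative real part, so there exist constants $K\geq 1$ and $\alpha>0$ with $\|e^{At}\|\leq Ke^{-\alpha t}$ for all $t\geq 0$. The integrand is then bounded in norm by $K^2\|BB^\top\|e^{-2\alpha t}$, which is integrable on $[0,\infty)$, so $X$ exists as a finite symmetric matrix.

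Next I would compute $AX+XA^\top$ by pulling $A$ and $A^\top$ inside the integral (justified by uniform convergence from the exponential bound above) and observing the key identity
\[
Ae^{At}BB^\top e^{A^\top t}+e^{At}BB^\top e^{A^\top t}A^\top=\frac{d}{dt}\bigl(e^{At}BB^\top e^{A^\top t}\bigr).
\]
By the fundamental theorem of calculus,
\[
AX+XA^\top=\int_0^\infty\frac{d}{dt}\bigl(e^{At}BB^\top e^{A^\top t}\bigr)dt=\Bigl[e^{At}BB^\top e^{A^\top t}\Bigr]_0^\infty.
\]
The value at $t=0$ is $BB^\top=D$, and the value at $t=\infty$ vanishes thanks to the same exponential decay estimate used in the first step. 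Hence $AX+XA^\top=-D$, which is the claimed Lyapunov equation.

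The only delicate point, and even that is not really an obstacle, is justifying the interchange of the derivative/limit with the integral and confirming the vanishing boundary term at infinity; both follow cleanly from the bound $\|e^{At}\|\leq Ke^{-\alpha t}$ furnished by stability. No fixed-point or contraction argument is needed here, since the formula for $X$ is given explicitly.
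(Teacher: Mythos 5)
Your proof is correct: the exponential bound from stability justifies convergence of the integral, the interchange of $A,A^\top$ with the integral, and the vanishing boundary term, and the fundamental-theorem-of-calculus computation yields $AX+XA^\top=-D$ exactly as claimed. The paper itself states this lemma without proof, citing standard control-theory texts, and your argument is precisely the standard verification those references use, so there is nothing further to reconcile.
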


\begin{lemma}[\cite{LR95}]\label{lem13}
Let the matrix $A\in {\mathbb R}^{N\times N}$ be stable and $B\in {\mathbb R}^{N\times N}$ be symmetric.
Then the Lyapunov equation
$$
AX+XA^\top = B
$$
has a unique symmetric solution $X$. Moreover, $X\geq 0$ if $B\leq 0$.
\end{lemma}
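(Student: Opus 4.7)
The plan is to establish existence by exhibiting $X$ as an explicit integral analogous to the formula in Lemma \ref{lem12}, to obtain uniqueness via a Kronecker product reformulation of the equation, and to deduce the semidefiniteness statement from a quadratic form computation on that integral.

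First, I would set
\[
X := -\int_0^{\infty} e^{At} B e^{A^\top t}\,dt.
\]
Stability of $A$ ensures that $\|e^{At}\|$ decays exponentially, so the integral converges absolutely. Using the identity $\tfrac{d}{dt}(e^{At} B e^{A^\top t}) = A e^{At} B e^{A^\top t} + e^{At} B e^{A^\top t} A^\top$, the fundamental theorem of calculus together with $e^{At} B e^{A^\top t} \to 0$ as $t \to \infty$ yields $AX + XA^\top = B$. Since $B$ is symmetric, transposing the integrand shows $X^\top = X$.

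For uniqueness, I would vectorize the equation to obtain $(I_N \otimes A + A \otimes I_N)\,\mathrm{vec}(X) = \mathrm{vec}(B)$. The eigenvalues of the Kronecker sum are precisely the pairwise sums $\lambda_i + \lambda_j$ with $\lambda_i, \lambda_j \in \sigma(A)$, all of which have strictly negative real part by stability of $A$. Hence the coefficient matrix is nonsingular and the symmetric solution constructed above is the unique solution.

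Finally, when $B \leq 0$, I would write $-B \geq 0$ and pull it inside the integral: for any $v \in \mathbb{R}^N$,
\[
v^\top X v = \int_0^{\infty} (e^{A^\top t} v)^\top (-B) (e^{A^\top t} v)\,dt \geq 0,
\]
because the integrand is pointwise nonnegative. I do not foresee a serious obstacle in this argument; the only mildly technical point is justifying the absolute convergence and the differentiation under the integral, both of which follow immediately from the exponential decay of $\|e^{At}\|$ guaranteed by stability.
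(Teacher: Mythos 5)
Your proof is correct: the integral representation $X=-\int_0^\infty e^{At}Be^{A^\top t}\,dt$, uniqueness via the nonsingularity of the Kronecker sum $I_N\otimes A + A\otimes I_N$ (eigenvalues $\lambda_i+\lambda_j$ with negative real parts), and the pointwise quadratic-form argument for $X\geq 0$ when $B\leq 0$ are all sound. The paper does not prove this lemma at all --- it is imported by citation from Lancaster and Rodman \cite{LR95} --- and your argument is essentially the standard one given there, so there is nothing to reconcile.
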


\begin{lemma}[\cite{Led83}]\label{lem14}
Let  $A, B\in {\mathbb R}^{N\times N}$ be symmetric matrices.
\begin{itemize}
  \item [1.] If $A>0$ and $B>0$, then $A\circ B>0$.
  \item[2.] If $A\geq 0$ and $B\geq 0$, then $A\circ B\geq0$. Moreover, $A\circ B>0$ when $A$ has no zero row.
\end{itemize}
\end{lemma}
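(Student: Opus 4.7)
The plan is to reduce Lemma~\ref{lem14} to a single identity that rewrites the Hadamard quadratic form $x^{\top}(A\circ B)x$ as a sum of ordinary quadratic forms in $A$, after which the positivity claims follow by inspection.

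First I would factor $B$ as $B=C^{\top}C$ (for example via the symmetric square root, or a Cholesky-type decomposition), with $C$ square and invertible in the definite case. Writing $c_k$ for the $k$-th row of $C$, this yields $B_{ij}=\sum_k c_{k,i}c_{k,j}$, so that for any $x\in\mathbb{R}^N$
\[
x^{\top}(A\circ B)x \;=\; \sum_{i,j} x_i x_j A_{ij} B_{ij} \;=\; \sum_k y_k^{\top} A\, y_k, \qquad y_k := D_x c_k,
\]
where $D_x=\mathrm{diag}(x_1,\ldots,x_N)$. The non-strict assertions in both parts then follow immediately: whenever $A\geq 0$, every term on the right is non-negative, so $A\circ B\geq 0$.

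For part~1, with $A>0$ and $B>0$ the matrix $C$ is invertible. Given any $x\neq 0$, I would pick an index $i$ with $x_i\neq 0$; since the $i$-th column of $C$ is non-zero, there is some $k$ with $c_{k,i}\neq 0$, hence $y_k\neq 0$. Strict positive-definiteness of $A$ then gives $y_k^{\top}Ay_k>0$, and therefore $x^{\top}(A\circ B)x>0$.

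For the ``moreover'' clause in part~2, I would take the second factor to be strictly positive (so $C$ is invertible) and $A\geq 0$ with no zero row, and argue by contradiction: if $x^{\top}(A\circ B)x=0$ for some $x\neq 0$, then each $y_k^{\top}Ay_k=0$, and because $A\geq 0$ this forces $Ay_k=0$ for all $k$. Setting $I=\{i:x_i\neq 0\}$, the vectors $y_k=D_x c_k$ lie in $\mathrm{span}\{e_i:i\in I\}$, and as $k$ ranges they span that subspace (by invertibility of $C$ together with invertibility of $D_x$ on it). Hence $Ae_i=0$ for every $i\in I$, so the $i$-th row of $A$ vanishes, contradicting the hypothesis. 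The hard step is exactly this last one: because $A$ is only semi-definite, $y_k^{\top}Ay_k=0$ no longer forces $y_k=0$, and one must combine the fact that each $y_k$ lies in $\ker A$ with a rank/spanning argument on the coordinate subspace supporting $x$ to reach the contradiction. The rest of the proof is bookkeeping around the factorization $B=C^{\top}C$.
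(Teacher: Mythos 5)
The paper never proves Lemma~\ref{lem14}: it is quoted as a known result (the Schur product theorem and its classical refinement, cited to \cite{Led83}), so there is no internal proof to compare against. Your argument is the standard proof of that theorem --- factor $B=C^{\top}C$ and expand $x^{\top}(A\circ B)x=\sum_k y_k^{\top}Ay_k$ with $y_k=D_xc_k$ --- and the steps you give are sound: part 1, the non-strict half of part 2, and the kernel/spanning argument ($Ay_k=0$ for all $k$, the $y_k$ span $\mathrm{span}\{e_i:x_i\neq0\}$, hence a zero row of $A$) for the strict conclusion are all correct as written.

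One point deserves emphasis. In the ``moreover'' clause you replaced the hypothesis $B\geq 0$ by $B>0$ (``I would take the second factor to be strictly positive''). That is not optional bookkeeping: as literally printed in the lemma, with both matrices only positive semidefinite, the claim is false --- take $A=I_N$ (which has no zero row) and $B=e_1e_1^{\top}$, so that $A\circ B=e_1e_1^{\top}$ is singular for $N\geq 2$. The correct classical statement, which is exactly what your argument establishes, is that $A\circ B>0$ when $B>0$ and $A\geq 0$ has no zero row (equivalently, for a semidefinite $A$, no zero diagonal entry). So your proof is right, but it proves a corrected version of the statement rather than the one in the paper, and you should flag that explicitly. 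For the paper itself the discrepancy is harmless, since the proof of Theorem~\ref{th31} only invokes the semidefinite half of part 2.
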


\section{The algebraic equation corresponding to QBSH}
In this section, we concentrate on the reachability Gramian matrix of the QBSH \eqref{qbs} by using the Volterra series. It will show that the Gramian matrix is the solution to a quadratic bilinear equation with Hadamard product (QBEH).

Only the continuous time-invariant QBSH \eqref{qbs} is considered and the discrete one can be derived analogously. It is known from \cite{S89, SVR08} that the output of a nonlinear system in the Volterra series depends on the input of the system at all times and it could be expanded as
\[
y(t)=h_{0}+\sum _{n=1}^{N}\int _{a}^{b}\cdots \int _{a}^{b}h_{n}(t_{1},\dots ,t_{n})\prod _{j=1}^{n}x(t-t _{j})\,dt_{j}.
\]
The function $h_{n}(t_{1},\dots ,t_{n})$ is called the order-$n$ Volterra kernel.

\begin{proposition}\label{prop21}
The state vector of the QBSH \eqref{qbs} can be formulated as
\begin{eqnarray}\label{xt}
x(t) &=& \int_0^{t} e^{At_1}Bu_{t_1}(t)dt_1 + \int_0^{t} \int_0^{t-t_1} e^{At_1}Me^{At_2}Bu_{t_1t_2}(t)u_{t_1}(t)dt_1dt_2\nonumber\\
&& \ \ + \int_0^{t} \int_0^{t-t_1} \int_0^{t-t_1} e^{At_1}((Ge^{At_2}B)\circ (Fe^{At_3}B))u_{t_1t_2}(t)u_{t_1t_3}(t)dt_1dt_2dt_3\nonumber\\
&& \ \ + \int_0^{t} \int_0^{t-t_1} \int_0^{t-t_1-t_2} e^{At_1}Me^{At_2}Me^{At_3}Bu_{t_1t_2t_3}(t)u_{t_1t_2}(t)
u_{t_1}(t)dt_1dt_2dt_3+ ... \ \ \
\end{eqnarray}
with $u_{t_1,...t_k}(t) = u(t-t_1-...-t_k)$ and $k\geq 1$.
\end{proposition}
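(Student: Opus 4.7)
The plan is to derive the expansion by a Picard/Neumann-type iteration based on the variation-of-parameters representation
\[
x(t)=\int_{0}^{t}e^{A(t-s)}\bigl[(Gx(s))\circ(Fx(s))+Mx(s)u(s)+Bu(s)\bigr]\,ds,
\]
which follows from $\dot x = Ax + (\text{nonlinear forcing})$ with $x(0)=0$. Substituting this identity into itself repeatedly and organizing the resulting terms by their homogeneity degree in $u$ will produce \eqref{xt}. The key change of variable throughout is $t_{1}=t-s$, which converts the convolution kernel $e^{A(t-s)}$ into $e^{At_{1}}$ and turns $u(s)$ into $u(t-t_{1})=u_{t_{1}}(t)$.

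First I would isolate the purely linear contribution: dropping the two nonlinear terms inside the integrand yields $x^{(1)}(t)=\int_{0}^{t}e^{A(t-s)}Bu(s)\,ds$, which after the substitution $t_{1}=t-s$ becomes exactly the first summand $\int_{0}^{t}e^{At_{1}}Bu_{t_{1}}(t)\,dt_{1}$. Next I would plug this $x^{(1)}$ into the bilinear forcing $Mx(s)u(s)$ in the integrand. Writing $x^{(1)}(s)=\int_{0}^{s}e^{At_{2}}Bu(s-t_{2})\,dt_{2}$ and again applying $t_{1}=t-s$ (so $s-t_{2}=t-t_{1}-t_{2}$ and the inner range becomes $0\le t_{2}\le t-t_{1}$) reproduces the second summand with the required kernel $e^{At_{1}}Me^{At_{2}}B$ and time-shifted inputs $u_{t_{1}t_{2}}(t)u_{t_{1}}(t)$.

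The Hadamard term is handled similarly, but requires the bilinearity $(Gx^{(1)}(s))\circ(Fx^{(1)}(s))$. Since $G$ and $F$ are linear and $\circ$ is bilinear,
\[
(Gx^{(1)}(s))\circ(Fx^{(1)}(s))=\int_{0}^{s}\!\!\int_{0}^{s}\bigl[(Ge^{At_{2}}B)\circ(Fe^{At_{3}}B)\bigr]u(s-t_{2})u(s-t_{3})\,dt_{2}dt_{3},
\]
and the outer $t_{1}=t-s$ substitution delivers the third summand, with the upper limit $t-t_{1}$ on both $t_{2}$ and $t_{3}$ because they are decoupled inside the double integral. Iterating once more, the fourth summand comes from substituting the already derived second-order bilinear piece into $Mx(s)u(s)$; the nested limits $t_{3}\le t-t_{1}-t_{2}$ arise because in that level the inner variable is constrained by $s-t_{2}\ge 0$ after the outer shift. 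Higher-order terms arise by feeding all previously constructed pieces back into all three nonlinear integrands and collecting contributions of each homogeneity degree in $u$.

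The main obstacle is bookkeeping rather than analysis: one must carefully track the arguments of $u$ under nested shifts so that a variable of the form $u(s-t_{j_{1}}-\cdots-t_{j_{k}})$ is correctly renamed to $u_{t_{1}\cdots t_{k}}(t)$, and keep the integration limits consistent (each new $t_{k}$ is bounded by the positivity of the remaining time variable after the substitutions above it). Since the proposition only asserts a formal Volterra-series representation, I would not pursue convergence here; it is enough to exhibit the iteration, verify the first four terms match \eqref{xt}, and indicate by the ``$\ldots$'' the pattern in which the bilinear operator $M$ and the Hadamard pair $(G,F)$ generate all further kernels by alternating substitutions.
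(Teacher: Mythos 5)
Your proposal is correct and follows essentially the same route as the paper: the paper's equation \eqref{xt0} is exactly your variation-of-parameters identity written in the shifted variable $t_1=t-s$, and the paper likewise substitutes this representation into itself (at times $t-t_1$, then $t-t_1-t_2$) and rearranges terms by order, just as you do. The only difference is cosmetic — you carry out the change of variables explicitly, while the paper writes the shifted arguments $x_{t_1}(t)$, $x_{t_1t_2}(t)$ from the start.
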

\begin{proof}
As the first equation in \eqref{qbs} is a differential system, one can integrate from both sides with respect to $t$ and get
\begin{eqnarray}\label{xt0}
x(t) &=& \int_0^{t} e^{At_1}Bu_{t_1}(t)dt_1 + \int_0^{t} e^{At_1}Mx_{t_1}(t)u_{t_1}(t)dt_1  +\int_0^{t} e^{At_1}((Gx_{t_1}(t))\circ (Fx_{t_1}(t)))dt_1
  \end{eqnarray}
with  $x_{t_1}(t) = x(t-t_1)$. If the integrated upper bound is replaced by $t-t_1$, $x_{t_1}(t)$ can also be represented as
\begin{eqnarray}\label{xt1}
x_{t_1}(t) &=& \int_0^{t-t_1} e^{At_2}Bu_{t_1t_2}(t)dt_2 + \int_0^{t-t_1} e^{At_2}Mx_{t_1t_2}(t)u_{t_1t_2}(t)dt_1  \nonumber\\
 &&+\int_0^{t-t_1} e^{At_2}((Gx_{t_1t_2}(t))\circ (Fx_{t_1t_2}(t)))dt_2
  \end{eqnarray}
with $x_{t_1t_2}(t) = x(t-t_1-t_2)$. By inserting \eqref{xt1} into \eqref{xt0}, one has
\begin{eqnarray}\label{xtt}
  x(t) &=& \int_0^{t} e^{At_1}Bu_{t_1}(t)dt_1 + \int_0^{t} \int_0^{t-t_1} e^{At_1}Me^{At_2}Bu_{t_1t_2}(t)u_{t_1}(t)dt_1dt_2\nonumber\\ && + \int_0^{t} \int_0^{t-t_1} e^{At_1}Me^{At_2}Mx_{t_1t_2}(t)u_{t_1t_2}(t)u_{t_1}(t)
  dt_1dt_2 \nonumber\\
  &&+\int_0^{t}\int_0^{t-t_1}\int_0^{t-t_1} e^{At_1}((Ge^{At_1}B)\circ (Fe^{At_1}B))u_{t_1t_2}(t)u_{t_1t_3}(t))dt_1dt_2dt_3 \nonumber\\
  && +O(\int\int\int\int).
\end{eqnarray}
Again, noting
\begin{eqnarray}\label{xt1t2}
x_{t_1t_2}(t) &=& \int_0^{t-t_1-t_2} e^{At_3}Bu_{t_1t_2t_3}(t)dt_3 + \int_0^{t-t_1-t_2} e^{At_3}Mx_{t_1t_2t_3}(t)u_{t_1t_2t_3}(t)dt_3  \nonumber\\ && +\int_0^{t-t_1-t_2} e^{At_3}((Gx_{t_1t_2t_3}(t))\circ (Fx_{t_1t_2t_3}(t)))dt_3
  \end{eqnarray}
and inserting \eqref{xt1t2} into \eqref{xtt}, the representation of $x(t)$ in \eqref{xt} holds true after rearranging some items.
\end{proof}

The above proposition describes the Volterra expansion of the state vector $x(t)$, which is helpful for constructing the quadratic bilinear equation. To see this, let
\begin{eqnarray}\label{}
L_1(t_1) &=&  e^{At_1}B,\nonumber\\
L_2(t_1,t_2) &=&  e^{At_2}Me^{At_1}B\nonumber\\
&:=&e^{At_2}ML_1(t_1),\nonumber\\
L_3(t_1,t_2,t_3) &=&  e^{At_3}[ (GL_1(t_1))\circ (FL_1(t_2)), \  Me^{At_2}Me^{At_1}B]\nonumber\\
&:=& e^{At_3}[ (GL_1(t_1))\circ (FL_1(t_2)), \ ML_2(t_1,t_2)], \nonumber\\
 ...& &\nonumber\\
L_k(t_1,...,t_k) &:=&  e^{At_k}[ (GL_1(t_1))\circ (FL_{k-2}(t_2,...,t_{k-1})), \nonumber\\
 && \ \ \ \ \ \ \ (GL_2(t_1,t_2))\circ (FL_{k-3}(t_3,...,t_{k-1})), \nonumber\\
&& \ \ \ \ \ \ \ ..., \nonumber\\
&& \ \ \ \ \ \ \  (GL_{k-2}(t_1,...,t_{k-2}))\circ (FL_1(t_{k-1})), \ ML_{k-1}(t_1,...,t_{k-1})]\nonumber
  \end{eqnarray}
for $k> 3$. The following theorem reveals that the reachability Gramian matrix is the solution of a QBEH.

\begin{theorem}\label{th21}
Let $A$ be the stable matrix in the QBSH \eqref{qbs}. Define the reachability Gramian matrix
\[
X = \sum_{i=1}^\infty \Big(\int_0^\infty...\ \int_0^\infty L_i(t_1,...,t_{i})L_i(t_1,...,t_{i})^\top dt_1...dt_i\Big).
\]
Then $X$ satisfies the QBEH
\begin{equation}\label{qbe}
  {\mathcal Q} (X) = AX +X A^\top +D+MXM^\top + G X G^\top\circ F X F^\top =0.
\end{equation}
\end{theorem}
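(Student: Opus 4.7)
The plan is to decompose the Gramian as $X = \sum_{k=1}^\infty X_k$, where $X_k := \int_0^\infty \cdots \int_0^\infty L_k(t_1,\ldots,t_k) L_k(t_1,\ldots,t_k)^\top \, dt_1\cdots dt_k$, derive a Lyapunov-type identity for each $X_k$, and then sum these identities so that the $M$- and Hadamard-contributions reassemble into $MXM^\top$ and $GXG^\top\circ FXF^\top$ respectively.

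For $k=1$ the integrand is $e^{At_1}BB^\top e^{A^\top t_1}$, so Lemma~\ref{lem12} immediately gives $AX_1 + X_1 A^\top + D = 0$. For $k\geq 2$ I would expand $L_k L_k^\top$ from the row-block recursion for $L_k$. Because $[R_1,\ldots,R_m][R_1,\ldots,R_m]^\top = \sum_j R_j R_j^\top$, this produces
$$L_k L_k^\top = e^{At_k}\Bigl[\sum_{j=1}^{k-2}\bigl((GL_j)\circ(FL_{k-1-j})\bigr)\bigl((GL_j)\circ(FL_{k-1-j})\bigr)^\top + ML_{k-1}L_{k-1}^\top M^\top\Bigr]e^{A^\top t_k},$$
with the convention that the sum is empty when $k=2$. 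Two elementary identities are then crucial: the rank-one Hadamard identity $(u\circ v)(u\circ v)^\top = (uu^\top)\circ(vv^\top)$, and the Fubini-style fact that whenever $P(s)$ and $Q(t)$ depend on disjoint groups of time variables one has $\int\int (P(s)\circ Q(t))\,ds\,dt = (\int P)\circ(\int Q)$. In the $j$-th summand above $L_j$ uses $t_1,\ldots,t_j$ and $L_{k-1-j}$ uses $t_{j+1},\ldots,t_{k-1}$, so this separation applies and, after integrating over $t_1,\ldots,t_{k-1}$, each summand becomes $(GX_j G^\top)\circ(FX_{k-1-j}F^\top)$.

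A second application of Lemma~\ref{lem12} then gives, for every $k\geq 2$,
$$AX_k + X_k A^\top + MX_{k-1}M^\top + \sum_{j=1}^{k-2}(GX_j G^\top)\circ(FX_{k-1-j}F^\top)=0.$$
Summing this identity together with the $k=1$ equation over all $k\geq 1$ collapses the linear part into $AX + XA^\top + D + MXM^\top$. For the residual double sum I would reindex by $i=j$, $\ell = k-1-j$ and appeal to bilinearity of the Hadamard product to obtain
$$\sum_{k\geq 3}\sum_{j=1}^{k-2}(GX_j G^\top)\circ(FX_{k-1-j}F^\top) = \Bigl(\sum_{i\geq 1}GX_i G^\top\Bigr)\circ\Bigl(\sum_{\ell\geq 1}FX_\ell F^\top\Bigr) = GXG^\top \circ FXF^\top,$$
which yields ${\mathcal Q}(X)=0$.

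The main obstacle I foresee is purely combinatorial bookkeeping: verifying the block-Hadamard expansion of $L_k L_k^\top$ against the recursive definition of $L_k$ and confirming that in each Hadamard summand the two factors use disjoint time variables, so that the Fubini separation and the subsequent reindexing are both legitimate. Convergence of the series defining $X$ is not addressed here and is deferred to the fixed-point analysis of the next section.
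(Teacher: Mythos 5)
Your proposal is correct and takes essentially the same route as the paper: decompose $X=\sum_{k}X_k$, apply Lemma \ref{lem12} level by level using the rank-one Hadamard identity $(u\circ v)(u\circ v)^\top=(uu^\top)\circ(vv^\top)$ together with the separation of the time variables, and then sum the resulting Lyapunov identities and reindex the Hadamard terms. Your level-$k$ identity with $MX_{k-1}M^\top$ is in fact the correct form (the paper's displayed equation \eqref{eqxk} contains a typo reading $MX_iM^\top$), and, like the paper, you leave the convergence of the series $\sum_k X_k$ unaddressed.
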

\begin{proof}
Let
\[
X_1 = \int_0^\infty L_1(t_1)L_1(t_1)^\top dt_1:= \int_0^\infty e^{At_1}BB^\top e^{A^\top t_1} dt_1.
\]
It follows from Lemma \ref{lem12} that $X_1$ is the solution of the Lyapunov equation
\begin{equation}\label{eqx1}
  AX_1 +X_1A^\top +D  =0
\end{equation}
with $D=BB^\top$. Next, consider the integration of order-2
\begin{eqnarray}
  X_2 &=& \int_0^\infty \int_0^\infty L_2(t_1,t_2)L_2(t_1,t_2)^\top dt_1dt_2\nonumber\\
  &=& \int_0^\infty \int_0^\infty e^{At_2}ML_1(t_1)L_1(t_1)^\top M^\top e^{A^\top t_1} dt_1dt_2\nonumber\\
  &=& \int_0^\infty  e^{At_2}M \Big( \int_0^\infty L_1(t_1)L_1(t_1)^\top dt_1\Big) M^\top e^{A^\top t_1} dt_2\nonumber\\
  &=& \int_0^\infty  e^{At_2}M X_1 M^\top e^{A^\top t_1} dt_2.\nonumber
\end{eqnarray}
By using Lemma \ref{lem12} again, $X_2$ is the solution of the following equation
\begin{equation}\label{eqx2}
  AX_2 +X_2A^\top + MX_1M^\top =0.
\end{equation}
Proceeding with the integration for $i\geq 3$, one can get
\begin{eqnarray}
  X_i &=& \int_0^\infty... \int_0^\infty L_i(t_1,...t_i)L_i(t_1,...,t_i)^\top dt_1...dt_i\nonumber\\
  &=& \int_0^\infty e^{At_i}\Big[ \Big((\int_0^\infty G L_1 L_1^\top G^\top dt_1) \circ (\int_0^\infty ... \int_0^\infty FL_{i-2}L_{i-2}^\top F^\top dt_2...dt_{i-2}) \nonumber\\
  && + ... + (\int_0^\infty... \int_0^\infty GL_{i-2}L_{i-2}^\top G^\top dt_1...dt_{i-2}) \circ (\int_0^\infty FL_{1}L_{1}^\top F^\top dt_{i-1})\Big) \nonumber\\
  && + M \Big( \int_0^\infty... \int_0^\infty L_{i-1}L_{i-1}^\top dt_1...dt_{i-1}\Big) M^\top \Big]  e^{A^\top t_i} dt_i\nonumber\\
  &=& \int_0^\infty e^{At_i}\Big[  (GX_1G^\top) \circ (FX_{i-2}F^\top) + ... + (GX_{i-2}G^\top) \circ (FX_1F^\top) +MX_{i}M^\top \Big]  e^{A^\top t_i} dt_i,\nonumber
\end{eqnarray}
in which we used the property $(v\circ u)(v\circ u)^\top =(vv^\top)\circ (uu^\top)$ with vectors $u$ and $v$. By Lemma \ref{lem12}, $X_i$ satisfies the equation
\begin{equation}\label{eqxk}
  AX_i+X_iA^\top + (GX_1G^\top) \circ (FX_{i-2}F^\top) + ... + (GX_{i-2}G^\top) \circ (FX_1F^\top) +MX_{i}M^\top =0.
\end{equation}
Now, sum up equations \eqref{eqx1}, \eqref{eqx2} and \eqref{eqxk} for $i\geq 3$. One has
\[
A\Big( \sum_{i=1}^\infty X_i\Big)+\Big( \sum_{i=1}^\infty X_i\Big)A^\top + BB^\top + M\Big( \sum_{i=1}^\infty X_i\Big)M^\top + \Big( G\Big( \sum_{i=1}^\infty X_i\Big)G^\top\Big) \circ \Big( F\Big( \sum_{i=1}^\infty X_i\Big)F^\top\Big)  =0
\]
which takes the form of the QBEH \eqref{qbe} by letting $X = \sum_{i=1}^\infty X_i$.
\end{proof}

{\bf Remark.} (1). As mentioned before, the computational complexity of the Hadamard product in the equation \eqref{qbe} is $O(N^2)$, compared with $O(N^4)$ of the Kronecker product in equation \eqref{qbemar}. Even though the truncation and compression \cite{LWCL13} or the tensor matrization technique \cite{KB09, BG17} can reduce the complexity for large-scale sparse matrices in the case of Kronecker product,  the Hadamard product is still more effective in saving the flops counts, especially for dense and structured matrices (for example, the diagonal-plus-low-rank structure).

(2). As the Hadamard product can be represented as the sum of rank-one matrices (see Sec. 3.6 of \cite{H91}),  the derived equation \eqref{qbe} can also be rewritten as a generalized stochastic or rational Riccati equation in \cite{BG17, Da04, FWC16, I07}. Here we always use the Hadamard product for the convenience of describing the existence of the solution.

\section{Existence of the solution to QBEH}
In this section, we will show the existence of the solution to the QBEH \eqref{qbe}.
Let
${\mathcal L}$ be a linear operator ${\mathbb R}^{N\times N}\rightarrow{\mathbb R}^{N\times N}$ given by
\[
{\mathcal L}(X) = A X +XA^\top.
\]
Consider the iteration scheme
\begin{equation}\label{fix}
{\mathcal L}(X_{k+1}) = -(GX_kG^\top)\circ (FX_kF^\top) -MX_k M^\top -D
\end{equation}
with an initial $X_0$. The following theorem shows the existence of the solution.
\begin{theorem}\label{th31}
Let $A$ be a stable matrix. Suppose that there is a positive (semi-)definite  matrix $Z$ to the inequality ${\mathcal Q}(Z)\geq 0$ and an initial matrix $X_0$ such that $X_0\geq Z$ and ${\mathcal Q}(X_0)\leq 0$. Then the fixed-point iteration \eqref{fix} produces a matrix sequence $\{X_k\}$ such that for $k\geq 0$
\begin{itemize}
  \item[1.] $X_k\geq X_{k+1}$, $X_k\geq Z$, \  ${\mathcal Q}(X_k)\leq 0$;

  \item[2.] $\lim_{k\rightarrow \infty}X_k =X^\ast$ is a positive (semi-)definite  solution to the QBEH \eqref{qbe}.
      Especially, $X^\ast$ is the maximal solution if $X_0$ is an upper bound for all solutions.
\end{itemize}
\end{theorem}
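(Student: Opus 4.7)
The plan is to prove claim~1 by induction on $k$ and then obtain claim~2 by monotone convergence. The central identity driving the whole argument is that, by the definitions of ${\mathcal Q}$ and the iteration \eqref{fix},
\[
{\mathcal Q}(X_k) = {\mathcal L}(X_k) + D + M X_k M^\top + G X_k G^\top \circ F X_k F^\top = {\mathcal L}(X_k) - {\mathcal L}(X_{k+1}) = {\mathcal L}(X_k - X_{k+1}).
\]
Combined with the inductive hypothesis ${\mathcal Q}(X_k) \leq 0$ and Lemma~\ref{lem13}, this immediately yields $X_k \geq X_{k+1}$. The induction therefore consists of propagating the three invariants $X_k \geq X_{k+1}$, $X_k \geq Z$, and ${\mathcal Q}(X_k) \leq 0$ simultaneously; the base case is given outright by the hypotheses on $X_0$.

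For the inductive step, $X_{k+1}$ must first be shown to exist as a symmetric positive semidefinite matrix. Assuming $X_k \geq Z \geq 0$, the matrices $M X_k M^\top$ and $G X_k G^\top \circ F X_k F^\top$ are positive semidefinite, the latter by Lemma~\ref{lem14} applied to $G X_k G^\top \geq 0$ and $F X_k F^\top \geq 0$; hence the right-hand side of \eqref{fix} is negative semidefinite and Lemma~\ref{lem13} produces a unique symmetric $X_{k+1} \geq 0$. The monotonicity $X_k \geq X_{k+1}$ then follows from the displayed identity. To establish $X_{k+1} \geq Z$, I would compute ${\mathcal L}(X_{k+1} - Z)$ directly: subtracting ${\mathcal L}(Z) = {\mathcal Q}(Z) - D - M Z M^\top - G Z G^\top \circ F Z F^\top$ from \eqref{fix} gives
\[
{\mathcal L}(X_{k+1} - Z) = -{\mathcal Q}(Z) - M(X_k - Z) M^\top - [\, G X_k G^\top \circ F X_k F^\top - G Z G^\top \circ F Z F^\top\,].
\]
Each piece on the right is $\leq 0$, granted the Hadamard-monotonicity fact that $P_1 \geq P_2 \geq 0$ and $Q_1 \geq Q_2 \geq 0$ imply $P_1 \circ Q_1 \geq P_2 \circ Q_2$; this follows from the telescoping identity $P_1 \circ Q_1 - P_2 \circ Q_2 = P_1 \circ (Q_1 - Q_2) + (P_1 - P_2) \circ Q_2$ together with Lemma~\ref{lem14}. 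Lemma~\ref{lem13} then delivers $X_{k+1} \geq Z$. Finally, ${\mathcal Q}(X_{k+1}) \leq 0$ is obtained by substituting ${\mathcal L}(X_{k+1})$ from \eqref{fix} back into the definition of ${\mathcal Q}$; this reduces to $-M(X_k - X_{k+1}) M^\top - [\, G X_k G^\top \circ F X_k F^\top - G X_{k+1} G^\top \circ F X_{k+1} F^\top\,]$, both brackets of which are $\geq 0$ by the same Hadamard-monotonicity fact applied to $X_k \geq X_{k+1}$.

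For claim~2, the sequence $\{X_k\}$ is a monotonically decreasing sequence of symmetric matrices bounded below by $Z$, so it converges entrywise to a symmetric limit $X^\ast \geq Z$; passing $k \to \infty$ in \eqref{fix} and using continuity of ${\mathcal L}$, of $X \mapsto M X M^\top$, and of the Hadamard product yields ${\mathcal Q}(X^\ast) = 0$. For the maximality assertion, if $X_0$ dominates every solution, I would prove by induction that $X_k \geq Y$ for any solution $Y$ of \eqref{qbe}, using exactly the same template as the proof of $X_{k+1} \geq Z$ but with ${\mathcal Q}(Y) = 0$ in place of ${\mathcal Q}(Z) \geq 0$; taking $k \to \infty$ then gives $X^\ast \geq Y$. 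The one step I expect to require genuine care is the Hadamard-monotonicity fact together with the sign bookkeeping when repeatedly invoking Lemma~\ref{lem13}; everything else is algebraic manipulation of the key identity ${\mathcal Q}(X_k) = {\mathcal L}(X_k - X_{k+1})$.
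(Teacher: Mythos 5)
Your proposal is correct and takes essentially the same route as the paper: an induction propagating $X_k\geq X_{k+1}$, $X_k\geq Z$ and ${\mathcal Q}(X_k)\leq 0$ using Lemma \ref{lem13} for the Lyapunov comparisons and Lemma \ref{lem14} for positivity of the Hadamard terms, followed by monotone convergence and passage to the limit in \eqref{fix}; your reorganization, deriving monotonicity directly from the identity ${\mathcal Q}(X_k)={\mathcal L}(X_k-X_{k+1})$ and proving the Hadamard-monotonicity via the correct telescoping $P_1\circ Q_1-P_2\circ Q_2=P_1\circ(Q_1-Q_2)+(P_1-P_2)\circ Q_2$, is if anything tidier than the paper's version of the same steps. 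The only caveat concerns the final maximality assertion: your induction $X_k\geq Y$ tacitly needs $Y\geq 0$ (so that the telescoped Hadamard difference is positive semidefinite), hence it establishes maximality among positive semidefinite solutions dominated by $X_0$ rather than among arbitrary solutions—but the paper states this claim without any argument, so your treatment is not weaker than the original.
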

\begin{proof}
The theorem is proved by induction applied to
\begin{equation}\label{ind}
  X_i\geq X_{i+1}, \ \ X_i\geq Z, \  \ {\mathcal Q}(X_i)\leq 0, \ \  i\geq 0.
\end{equation}
For $i=0$, the assumption admits $X_0\geq Z$ and ${\mathcal Q}(X_0)\leq 0$. It follows from \eqref{fix} that
\begin{eqnarray}
&&A(X_1-X_0) +(X_1-X_0)A^\top \nonumber\\
&=& -(GX_0G^\top)\circ (FX_0F^\top) - MX_0M^\top -D- AX_0 -X_0A^\top\nonumber\\
&=&-{\mathcal Q}(X_0),\nonumber
\end{eqnarray}
implying $X_0\geq X_1$ by the assumption and Lemma \ref{lem13}. Thus, \eqref{ind} holds for $i=0$.

Now, suppose that \eqref{ind} is true for $i=k$. We next show that it is valid for $i=k+1$. In fact, it follows from the iteration \eqref{fix} that
\begin{eqnarray}
  &&A(X_{k+1}-Z) +(X_{k+1}-Z)A^\top\nonumber\\
&=& -(GX_kG^\top)\circ (FX_kF^\top) - MX_kM^\top -D- AZ -ZA^\top\nonumber\\
&=& -(G(X_k-Z)G^\top)\circ (FX_kF^\top) - (GX_kG^\top)\circ (F(X_k-Z)F^\top) -M(X_k-Z)M^\top -{\mathcal Q}(Z).\nonumber
\end{eqnarray}
As ${\mathcal Q}(Z)\geq 0$, $X_k-Z\geq 0$ and $X_k$ is positive (semi-)definite from the induction assumption, it follows from Lemma \ref{lem13} that the solution $X_{k+1}-Z$ of the above equation is unique and positive (semi-)definite, i.e. $X_{k+1}\geq Z$. Moreover, the iteration \eqref{fix} also indicates
\begin{eqnarray}
&&A(X_{k+1}-X_{k+2}) +(X_{k+1}-X_{k+2})A^\top\nonumber\\
&=& -(GX_kG^\top)\circ (FX_{k}F^\top) - MX_kM^\top +(GX_{k+1}G^\top)\circ (FX_{k+1}F^\top) + MX_{k+1}M^\top \nonumber\\
&=& -(G(X_k-X_{k+1})G^\top)\circ (F(X_k-X_{k+1})F^\top) \nonumber\\
&&   -(GX_kG^\top)\circ (FX_{k+1}F^\top)
-(GX_{k+1}G^\top)\circ (FX_kF^\top) -M(X_k-X_{k+1})M^\top \nonumber\\
&\leq& -(G(X_k-X_{k+1})G^\top)\circ (F(X_k-X_{k+1})F^\top) -2 (GZG^\top)\circ (FZF^\top) -M(X_k-Z)M^\top, \nonumber
\end{eqnarray}
where the inequality follows from the induction $X_k\geq Z$ and the proved fact $X_{k+1}\geq Z$. Consequently, the right hand side of the inequality is negative semi-definite and the inequality $X_{k+1}\geq X_{k+2}$ holds true by Lemma \ref{lem14}. Finally, the inequality
\begin{eqnarray}
&&{\mathcal Q}(X_{k+1})\nonumber\\
 &=& AX_{k+1} +X_{k+1}A^\top + (GX_{k+1}G^\top)\circ (FX_{k+1}F^\top) + MX_kM^\top +D \nonumber\\
 &=& A(X_{k+1}-X_{k+2}) +(X_{k+1}-X_{k+2})A^\top\nonumber\\
&\leq&0 \nonumber
\end{eqnarray}
shows that the induction assumption \eqref{ind} holds for $i=k+1$. Then the sequence $\{X_k\}$ is well defined and has a limit $\lim_{k\rightarrow\infty} X_k = X^\ast$. Moreover, $X^\ast\geq Z$. Taking the limit from both sides of the iteration \eqref{fix} indicates that $X^\ast$ is the solution to the QBEH \eqref{qbe}. Furthermore, $X^\ast$ is the maximal solution when $X_0$ is the upper bound of all solutions.
\end{proof}

{\bf Remark}: For the rational Riccati equations in \cite{FWC16, Guo01, I07}, the stochastic term generally forms a positive operator, pushing against the stability. Then the condition of the stochastic stability is required to guarantee the existence of the solution. However, in the QBEH \eqref{qbe}, the nonlinear item will form a negative operator when shifted to the right of the equation. Then the Lemma \ref{lem13} is applicable by the assumption on the stability of $A$. The following theorem further indicates the linear convergence of the sequence $\{X_k\}$ in the fixed-point iteration \eqref{fix}.

\begin{theorem}\label{th32}
Let $X^\ast$ be the solution to the QBEH and the sequence $\{X_k\}$ be produced by the iteration \eqref{fix}. Let \[{\mathcal M}_{X^\ast}(\cdot) = M(\cdot)M^\top + (GX^\ast G^\top) \circ (F(\cdot)F^\top) + (G(\cdot) G^\top) \circ (FX^\ast F^\top)\]
be a linear operator at the solution $X^\ast$.
If $\rho({\mathcal L^{-1}}{\mathcal M}_{X^\ast})<1$, then
\[
\limsup_{k\rightarrow\infty}\sqrt[k]{\|X_k-X^\ast\|}\leq \rho({\mathcal L^{-1}} {\mathcal M}_{X^\ast})<1
\]
with $\|\cdot\|$ any matrix norm.
\end{theorem}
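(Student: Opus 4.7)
The plan is to subtract the recurrence from the fixed-point equation, linearise around $X^\ast$, and invoke a Gelfand-type norm trick to turn the spectral radius $\rho({\mathcal L}^{-1}{\mathcal M}_{X^\ast})$ into the asymptotic contraction rate. I shall write $E_k := X_k - X^\ast$ for the error and assume $E_k\to 0$, as is guaranteed in the setting of Theorem \ref{th31}.

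First I would subtract ${\mathcal L}(X_{k+1}) = -(GX_kG^\top)\circ (FX_kF^\top) - MX_kM^\top - D$ from the equation $0 = {\mathcal L}(X^\ast) + (GX^\ast G^\top)\circ(FX^\ast F^\top) + MX^\ast M^\top + D$ satisfied by the limit, and then apply the bilinearity identity
\[
(GXG^\top)\circ(FXF^\top) - (GYG^\top)\circ(FYF^\top) = (G(X-Y)G^\top)\circ(FXF^\top) + (GYG^\top)\circ(F(X-Y)F^\top)
\]
with $X=X_k$ and $Y=X^\ast$. This rearranges to
\[
{\mathcal L}(E_{k+1}) = -{\mathcal M}_{X^\ast}(E_k) - (GE_kG^\top)\circ(FE_kF^\top),
\]
so that $E_{k+1} = {\mathcal T}(E_k) + {\mathcal R}(E_k)$, where ${\mathcal T} := -{\mathcal L}^{-1}{\mathcal M}_{X^\ast}$ is the linear part and the remainder satisfies $\|{\mathcal R}(E_k)\| \leq C\|E_k\|^2$ for a constant $C$ depending only on $F$, $G$ and $\|{\mathcal L}^{-1}\|$ (the latter is finite because $A$ is stable).

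Next, given any $\epsilon>0$, I would invoke Householder's lemma (a standard consequence of the Jordan form / Gelfand's spectral-radius formula) to pick a submultiplicative operator norm $\|\cdot\|_\epsilon$ on ${\mathbb R}^{N\times N}$ such that $\|{\mathcal T}\|_\epsilon \leq \rho({\mathcal T}) + \epsilon/2$. Equivalence of norms in finite dimensions preserves the bound $\|{\mathcal R}(E_k)\|_\epsilon \leq C'\|E_k\|_\epsilon^2$. Since $E_k\to 0$, for all $k$ past some index $k_0$ we have $C'\|E_k\|_\epsilon < \epsilon/2$, and therefore
\[
\|E_{k+1}\|_\epsilon \leq \bigl(\rho({\mathcal L}^{-1}{\mathcal M}_{X^\ast}) + \epsilon\bigr)\|E_k\|_\epsilon, \qquad k\geq k_0.
\]
Iterating this bound and extracting the $k$-th root yields $\limsup_{k\to\infty}\sqrt[k]{\|E_k\|_\epsilon}\leq \rho({\mathcal L}^{-1}{\mathcal M}_{X^\ast}) + \epsilon$; letting $\epsilon\downarrow 0$ and using equivalence of norms produces the claim.

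The main obstacle is exactly the norm-selection step: without the $\epsilon$-perturbed norm one cannot in general beat the operator norm of ${\mathcal L}^{-1}{\mathcal M}_{X^\ast}$ by its spectral radius, so this device is what sharpens the estimate to the stated optimal rate. A secondary technical check is that the quadratic remainder truly is $O(\|E_k\|^2)$ with a $k$-independent constant; this follows from submultiplicativity, the general inequality $\|A\circ B\|\leq c\|A\|\|B\|$ valid on finite-dimensional matrix spaces, and boundedness of $\|{\mathcal L}^{-1}\|$.
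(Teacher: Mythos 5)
Your proposal is correct, and at its core it rests on the same idea as the paper: the iteration \eqref{fix} is a fixed-point map whose linearization at $X^\ast$ is exactly $-{\mathcal L}^{-1}{\mathcal M}_{X^\ast}$ (your ${\mathcal T}$ is the paper's Fr\'{e}chet derivative ${\mathcal F}'_{X^\ast}$), so the asymptotic rate is its spectral radius. The difference is in how that last step is discharged: the paper simply computes ${\mathcal F}'_{X^\ast}$ and cites the classical local-convergence-rate theorem for fixed-point iterations (Krasnoselskii et al.\ \cite{KVZ72}), whereas you prove that theorem from scratch in this concrete setting --- deriving the exact error recursion ${\mathcal L}(E_{k+1})=-{\mathcal M}_{X^\ast}(E_k)-(GE_kG^\top)\circ(FE_kF^\top)$ (your bilinear splitting checks out, and the quadratic remainder is indeed $O(\|E_k\|^2)$ with a $k$-independent constant), then using the $\epsilon$-perturbed norm with $\|{\mathcal T}\|_\epsilon\leq\rho({\mathcal T})+\epsilon/2$ and norm equivalence. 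Your route is more elementary and self-contained, and it has the virtue of making explicit two points the paper leaves implicit in the citation: that ${\mathcal L}$ is invertible because $A$ is stable, and that one must already know $X_k\to X^\ast$ (you borrow this from the setting of Theorem \ref{th31}; the cited classical theorem would instead give only local attraction for $X_0$ near $X^\ast$). What the paper's approach buys is brevity and generality --- the cited result covers any Fr\'{e}chet-differentiable fixed-point map --- while yours buys transparency about the constants and hypotheses actually used.
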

\begin{proof}
 Rewrite the iteration \eqref{fix} as $X_{k+1} = {\mathcal F}(X_k)$ with the operator
 \[
{\mathcal F}(\cdot) = {\mathcal L}^{-1}(-M(\cdot)M^\top - (G(\cdot) G^\top) \circ (F(\cdot)F^\top) -D).
 \]
Then the Fr\'{e}chet derivative of ${\mathcal F}$ at the solution $X^\ast$ is
\[
{\mathcal F}'_{X^\ast}(\Delta) = {\mathcal L}^{-1}(-M\Delta M^\top - (G\Delta G^\top) \circ (FX^\ast F^\top) -(GX^\ast G^\top) \circ (F\Delta F^\top) ).
\]
The conclusion is readily drawn from a classic theorem of fixed-point iteration such as in \cite{KVZ72}.
\end{proof}

{\bf Remark.} (1). The solver of the QBEH \eqref{qbe} determines the effectiveness of the balancing type MOR.  Theorem \ref{th32} indicates that the convergence rate of the
fixed-point iteration \eqref{fix} is linear when $\rho({\mathcal L^{-1}}{\mathcal M}_{X^\ast})<1$. If $\rho({\mathcal L^{-1}}{\mathcal M}_{X^\ast})=1$, the convergence of the iteration \eqref{fix} will degenerate to be sub-linear. In any case, acceleration of the iteration \eqref{fix} should be further considered.

(2). The initial $X_0\geq Z$ in Theorem 2 is similar to the one in \cite{DH01}. Usually, it is not easy to validate the condition ${\mathcal Q}(X_0)\leq 0$. However, there is another easier way to select the initial matrix and this will be discussed in future work.

(3). The condition of the convergence in Theorem 3 is somewhat equivalent to the stochastic stability for stochastic rational Riccati equations. See \cite{Da04, FWC16, Guo01, I07} as well as references therein for more details.

\section{Conclusions}
The quadratic bilinear system associated with the Kronecker product is rewritten as another system related to the Hadamard product according to the implicit matrix structure. The corresponding quadratic bilinear equation is subsequently obtained via the Volterra series and the existence of the solution is established by a fixed-point iteration. As the balancing type MOR method depends heavily on the solution to the QBEH \eqref{qbe}, more efficient solvers might be developed in future research.




\end {document}